\theoremstyle{plain}
\newtheorem{theorem}{Theorem}
\newtheorem{lemma}[theorem]{Lemma}
\newtheorem{proposition}[theorem]{Proposition}
\newtheorem{assumption}[theorem]{Assumption}
\theoremstyle{remark}
\newtheorem*{ack}{Acknowledgements}
\theoremstyle{definition}
\newtheorem{remark}[theorem]{Remark}
\newtheorem{definition}[theorem]{Definition}
\newtheorem{example}[theorem]{Example}
\numberwithin{equation}{section}
\numberwithin{theorem}{section}
\DeclareMathOperator{\codim}{codim}
\newcommand{\bM}{\overline{M}}
\newcommand{\en}{ext\nu}
\newcommand{\rintfrac}[2]{\genfrac{\lceil}{\rceil}{}{1}{#1}{#2}}
\newcommand{\Zol}{\.Zo\l{}\c{a}dek {}}
\begin{document}
\title[Deformations of singularities]{Deformations of singularities of plane curves. Topological approach.}
\author{Maciej Borodzik}
\address{Institute of Mathematics, University of Warsaw, ul. Banacha 2,
02-097 Warsaw, Poland}
\email{mcboro@mimuw.edu.pl}
\date{23 September 2009}
\subjclass{Primary 14H20; Secondary 14H10, 57M25;}
\keywords{plane curve singularity, deformations, Tristram--Levine signature, algebraic links, codimension, $M-$number}
\thanks{Supported by Polish MNiSz Grant N N201 397937. The author is also supported by Foundation for Polish Science (FNP)}

\begin{abstract}
In this paper we use a knot invariant, namely the Tristram--Levine signature, to study deformations of singular points of
plane curves. We bound, in some cases, the difference between the $M$ number of the singularity of 
the central fiber and the sum of $M$ numbers of the generic fiber.
\end{abstract}

\maketitle
\section{Introduction}
A deformation of a plane curve singularity is, roughly speaking, a smooth family of germs of plane algebraic curves $\{C_s\}_{s\in D}$ (we
consider here only deformations over a disk in $\mathbb{C}$) 
such that $C_s\subset\mathbb{C}^2$ and a distinguished member, say $C_{0}$, has a singular point at $0\in\mathbb{C}^2$. 
The question we
address is the following: how are related to each other singular points of $C_{0}$ and of $C_{s}$ with $s$ sufficiently small?
This question, although already very difficult, becomes even more involved if we impose some topological 
constrains on the general members $C_s$.
For example, we can require all of them to be rational, which means that each $C_s$ is a union of immersed disks.

The rationality condition is justified for various reasons. For example, let us be given a flat family $C_s$ of 
projective curves in some 
surface $Z$ and this family specialises to 
a curve $C_0$ with the same geometric genus as $C_s$. Then, for each singular point $z\in C_0$, we can take
a sufficiently small ball $B$ around $z$ and the family $C_s\cap B$ provides a deformation of a singular point such that
all curves $C_s\cap B$ are rational.

To show a more specific example, we can take $C=C_{mn}$ to be a polynomial curve given in parametric form
by $C=\{(t^n,t^m),t\in\mathbb{C}\}$ with $n,m$ coprime, and assume $C'$ is also parametric $C'=\{(x(t),y(t)),t\in\mathbb{C}\}$
with $\deg x=n$, $\deg y=m$.
Then for $s\in\mathbb{C}\setminus\{0\}$, the
mapping $(s^{n}x(t/s),s^{m}y(t/s))$
parametrises a curve that is algebraically isomorphic to $C'$ and, for sufficiently small $s$, is very close to $C$. In other
words, every polynomial curve of bidegree $(n,m)$ specialises to $(t^n,t^m)$. In particular if a polynomial curve
of bidegree $(n,m)$ has some singularity, this singularity can be specialised to the quasi-homogeneous singularity $(t^n,t^m)$.
So, classifying parametric deformations encompasses the problem of finding possible singularities of a polynomial curve
of a given bidegree. 
The characterisation of possible singularities of polynomial curves is, in turn, a problem with applications beyond algebraic
geometry itself, for example in determining the order of weak focus of some ODE systems (see \cite{ChLy} and \cite[Section 5]{BZ}).

In \cite{Or,BZ} there was defined a new invariant of plane curve singularities, namely a codimension (or, as Orevkov calls it,
a rough $\bM$-number). It is, roughly speaking, the codimension of the (topological) equisingularity
stratum in the appropriate space of parametric singularities. A naive parameter counting argument suggests that
this invariant is 
upper-semicontinuous under parametric deformations. 
Yet proving this appears to be an extremely difficult task. On the one hand, the $\bM$ number
can be expressed by some intersection number of divisors in the resolution of singularity, but then
the blow--up diagram changes after a deformation in a way that we are still far from understand. 
In an algebraic approach, the geometric genus of nearby fibers is quite difficult to control.
On the other hand, the famous Hirano's example \cite{Hir} can be used to show, that a naive generalisation of this 
expected semicontinuity property fails if we allow the curves $C_s$
to have higher genera.

A possible rescue comes from a very unexpected place, namely from knot theory. It turns out that the $\bM$ number, or its more
subtle brother, the $M$ number, is very closely related to the integral of the Tristram--Levine signature of the knot
of the singularity (\cite{Bo2}). We say a knot, instead of a link, to emphasize that this relationship has been proved only
in the case of cuspidal singularities. On the other hand, we can apply methods from \cite{Bo} to study the changes of the
Tristram--Levine signature. Putting things together we obtain a bound for the difference between the sum of $M$-numbers
of singular points of a generic fiber and the sum of $M$-numbers of singular points of the central fiber, provided that the
curves have only cuspidal singularities or double points.

Alas, this result is not that strong as we could hope to prove. However, to the knowledge of the author, it is one of very few results on
that subject. Moreover the result can possibly be improved by applying different knot invariants than the Tristram--Levine signature.
  
The structure of the paper is the following. First we precise, what is a deformation (Section~\ref{Sub1}). 
Then we recall definitions of codimension (Section~\ref{codimension}). Section~\ref{tristram} is devoted to the application of the 
Tristram--Levine signature. We recall a definition of the Tristram--Levine signature and cite two results
from \cite{Bo} and \cite{Bo2}. This allows to provide the promised estimates in Section~\ref{estimates}.

\section{What is a deformation?}\label{Sub1}
By a \emph{deformation} of a plane curve singularity over a base space $(D,0)$, where $D\subset\mathbb{C}$ is an open disk, 
we mean a pair $(\mathcal{X},X_0)$, where $\mathcal{X}$ is
a germ of an algebraic surface (called a \emph{total space}) 
and $X_0\subset\mathcal{X}$ is a curve (called a \emph{central fiber}), together
with a flat morphism $\pi\colon (\mathcal{X},X_0)\to (D,0)$ and an algebraic map $F\colon \mathcal{X}\to \mathbb{C}^2$ such that 
$X_s=\pi^{-1}(s)$ is a (germ of an) algebraic curve and $F|_{X_s}$ is generically one to one on its image. 
\begin{remark}
If it does not lead to a confusion we shall identify $X_s$ with its image $F(X_s)\subset\mathbb{C}^2$.
\end{remark}
We put some additional technical conditions on the deformation, motivated by the Milnor fibration.
\begin{itemize}
\item[(D1)] $X_0$ is homeomorphic either to a disk or to a bunch of disks glued at $0\in\mathbb{C}^2$. In both
cases $X_0$ is smooth away from $0$;
\item[(D2)] There exists a ball $B=B(0,\delta)\subset\mathbb{C}^2$ such that $X_s\subset B$, $\partial X_s\subset\partial B$ and
$X_s$ is transverse to $\partial B$;
\item[(D3)] The intersection $X_0\cap\partial B$ is the link of the singularity of $X_0$ at $0$.
\end{itemize}
If we are given a deformation $(\tilde{\mathcal{X}},\tilde{X_0})$ not necessarily satisfying conditions (D1)--(D3),
we may chose $\delta$ so small
that $\tilde{X}_{0}$ is transverse to $\partial B(0,\delta)$ and (D1), (D3) are satisfied. Then we may shrink the 
base disk $D$, if necessary,
such that for all $s\in D$, $\tilde{X}_s$ is still transverse to $\partial B(0,\delta)$. If we define
$\mathcal{X}=F^{-1}(B(0,\delta))$ then this new deformation has already properties (D1)--(D3). 

From now on, a deformation will always mean a deformation of a plane curve singularity satisfying conditions (D1), (D2) and (D3).
\begin{definition}
The \emph{genus} $g$ of the deformation is the geometric genus (i.e. the topological genus of the normalisation)
of a generic fiber $X_s$. The deformation is \emph{rational} if
$g=0$, in which case all $X_s$ are sums of immersed disks. The deformation is \emph{unibranched} if $X_0$ is a disk. The deformation
is \emph{parametric} if it is both rational and unibranched.
\end{definition}
The intersection of $X_s$ with the ball $B$ from Property~(D2) is a link, which we shall denote $L_s$. As this intersection
is transverse for each $s\in D$, the isotopy type of $L_s$ does not depend on $s$.
\begin{definition}
The (isotopy class of the) link $L_s$ is called the \emph{link of the deformation}. It is denoted by $L_X$.
\end{definition}
\begin{remark}
Property~(D3) ensures that $L_X$ can be identified with the link of singularity of $F(X_0)$.
\end{remark}
\begin{lemma}
Let $(\mathcal{X},X_0)$ be a parametric deformation. Then, there exists such an $\varepsilon'<\varepsilon$ and a family of
holomorphic functions
\begin{align*}
x_s(t)&=a_0(s)+a_1(s)t+\dots\\
y_s(t)&=b_0(s)+b_1(s)t+\dots
\end{align*}
with $|s|<\varepsilon$ that $(x_s,y_s)$ locally parametrises $X_s$ and both $x_s$ and $y_s$ depend analytically on $s$.
\end{lemma}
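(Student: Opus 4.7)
The plan is to realise the family of parametrisations as a local holomorphic coordinate on the normalisation of the total space.

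First I would pass to the normalisation $\nu\colon\tilde{\mathcal{X}}\to\mathcal{X}$ and set $\tilde\pi=\pi\circ\nu\colon\tilde{\mathcal{X}}\to D$. Because the deformation is both rational and unibranched, $L_X$ is a knot (by (D2) and (D3)), and hence each fibre $X_s$ is a single immersed disk whose normalisation is biholomorphic to a disk in $\mathbb{C}$. Since $X_0$ is unibranched at $0$, the preimage of $(0,0)\in\mathcal{X}$ under $\nu$ consists of a single point $\tilde p_0$.

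The key intermediate step is to show that $\tilde{\mathcal{X}}$ is smooth at $\tilde p_0$ and that $\tilde\pi$ is a submersion there; equivalently, that the family $\pi$ admits a \emph{simultaneous normalisation}. I would deduce this from Teissier's criterion, after verifying that $\sum_{p}\delta_p(X_s)$, summed over the singular points of $X_s$ lying in $B$, is independent of $s\in D$. The $\delta$-constancy in turn follows from the invariance of the isotopy type of the link $L_s=X_s\cap\partial B$ (guaranteed by (D2)) together with the fact that each $X_s$ has normalisation of genus zero, using the standard relation between $\mu$, $\delta$ and the number of branches of a plane curve singularity.

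Once smoothness of $\tilde{\mathcal{X}}$ near $\tilde p_0$ is established, I would pick holomorphic coordinates $(s,t)$ on a neighbourhood of $\tilde p_0$ compatible with $\tilde\pi$, so that $\tilde\pi(s,t)=s$ and $\tilde p_0$ corresponds to the origin. Composing with $F\circ\nu$ and expanding as a power series in $t$ produces the required families $a_i(s),b_i(s)$; shrinking the base to $|s|<\varepsilon'$ takes care of uniform convergence.

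The main obstacle will be the simultaneous normalisation step. One has to handle carefully the topological bookkeeping needed to read $\sum\delta_p(X_s)$ off the link $L_s$, and then to translate this into the analytic statement that $\nu$ restricts fibrewise to the normalisation. A more direct, if less conceptual, route would start from a Newton--Puiseux parametrisation of $X_0$ and extend it to neighbouring fibres via the implicit function theorem; however, this approach demands extra care because new singular points of $X_s$ may appear away from $0$ as $s$ varies, and their appearance has to be shown not to obstruct the extension.
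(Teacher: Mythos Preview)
Your proposal is correct and follows essentially the same route as the paper: the paper observes that parametricity together with transversality of $X_s$ to $\partial B$ forces $\delta$-constancy, invokes the equinormalisability criterion (citing \cite[Section~2.6]{GLS}, which is your Teissier criterion), concludes that the normalisation of $\mathcal{X}$ is a product $D\times D'$, and then reads off $x_s,y_s$ from $F\circ\rho$. Your write-up is in fact more detailed than the paper's, which compresses the whole argument into four sentences; your remark about the alternative Newton--Puiseux route is extra and not discussed there.
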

\begin{proof}
The assumptions on the parametricity and transversality guarantee that the deformation is $\delta-$constant, hence equinormalisable
(see \cite[Section~2.6]{GLS}).
By assumptions, the normalisation of $\mathcal{X}$ is a product $D\times D'$, where $D'$ is a small disk. 
Let $\rho$ be a normalisation map.
The maps $x_s$ and $y_s$ are recovered by projecting the composition $F\circ\rho$ onto corresponding coordinates in $\mathbb{C}^2$.
\end{proof}

\section{Codimension}\label{codimension}
The codimension is a topological invariant of a plane curve singularity.
We recall here a definition from \cite{BZ}.
\begin{definition}
Let $\mathcal{T}$ be a topological type of a plane curve cuspidal singularity with 
multiplicity $m$. Let $\mathcal{H}$ be the space of polynomials 
in one variable. Consider the stratum  $\Sigma_C\subset \mathcal{H}$ consisting
of such polynomials $y$ that a singularity parametrised by
\[
t\to (t^m,y(t)) 
\]
defines a singularity at $0$ of type $\mathcal{T}$. Then the \emph{external codimension} of the singularity $\mathcal{T}$
is
\[
\en=\codim(\Sigma\subset\mathcal{H})+m-2.
\]
\end{definition}
The interpretation of the definition is the following. If we consider the space of pairs of polynomials $(x(t),y(t))$ of 
sufficiently high degree,
then the subset of those parametrising a curve with a singularity of type $\mathcal{T}$ forms a subspace of codimension 
$\en(\mathcal{T})$. In fact, there
are $m-1$ condition for the derivatives of $x$ to vanish at some point, $\codim(\Sigma\subset\mathcal{H})$ conditions for 
the polynomial $y$ (the degree of $y$
is assumed to be high enough so that these conditions are independent). The missing $-1$ comes from the fact that we do 
not require the singularity to be
at $t=0$, but we have here sort of freedom.

\begin{remark}
In \cite{BZ} the assumption that $m$ is the multiplicity is not required. If $m$ is not the multiplicity, then \eqref{eq:BZ}
below, does no longer hold.
\end{remark}
The above definition can be generalised to multibranched singularities. We refer to \cite{BZ} for detailed definitions. 

There exists also a construction of the $\bM$ number in a coordinate--free way. It can be done as follows.
Let $(C,0)$ be a germ of a plane curve singularity at $0$, not necessarily unibranched. Let $\pi:(U,E)\to(C,0)$ be the 
minimal resolution of this singularity, where $E=\sum E_i$
is the exceptional divisor with a reduced structure. Let $K$ be a (local) canonical divisor on $U$, which means that 
$K=\sum\alpha_iE_i$ and $(K+E_i)\cdot E_i=-2$
for exceptional curves $E_i$. Let $C'$ be the class of the strict transform of $C$, and $D=C'+E$.
\begin{definition}\label{roughM}
A \emph{rough $\bM$-number} of $(C,0)$ is the quantity
\[K\cdot (K+D).\]
\end{definition}
We have the following fact (see \cite{BZ})
\begin{equation}\label{eq:BZ}
\bM=\en.
\end{equation}
Orevkov \cite{Or} defines, besides a rough $\bM$-number, a fine $M$-number of a singularity. We should take
the Zariski--Fujita decomposition
\[K+D=H+N,\]
where $H$ is positive and $N$ nef. We have the following definition (see \cite[Definition~4.3]{BZ}).
\begin{definition}
The $M$ number of the singularity is equal to $\bM-N^2$.
\end{definition}

\noindent $N^2$ is always non-positive, so $\bM\le M$. 
For cuspidal singularities we have $N^2<-1/2$, while for an ordinary $d-$tuple point $N=0$.

Both $\bar M$ and $M$ numbers can be very effectively calculated from the Eisenbud--Neumann diagram. 
An algorithm can be found for
example in \cite{BZ}. We provide a simple, but important example.
\begin{example}\label{e:Mpq}
Let $p,q$ be coprime positive integers and consider the singularity $\{x^p-y^q=0\}$. Its $\bM$ number is equal to
$p+q-\rintfrac{p}{q}-\rintfrac{q}{p}-1$, while
\begin{equation}\label{eq:Mpq}
M=p+q-\frac{p}{q}-\frac{q}{p}-1.
\end{equation}
\end{example}
\begin{example}
Both $\bM$ and $M$ numbers of an ordinary double point are zero.
\end{example}
\section{Tristram--Levine signatures}\label{tristram}

Let $L$ be a link in $S^3$. Let $V$ be a Seifert matrix of $L$. Let $\zeta\in\mathbb{C}$, $|\zeta|=1$.
\begin{definition}
The \emph{Tristram--Levine signature} of $L$ is the signature $\sigma_L(\zeta)$ of the Hermitian form given by the matrix
\[(1-\zeta)V+(1-\bar{\zeta})V^T.\]
\end{definition} 
It is well-known that $\sigma_L$ is a link invariant. It is also easily computable for algebraic links.
\begin{example}\label{e:sigTpq}
Let us consider the singularity $\{x^p-y^q=0\}$ as in Example~\ref{e:Mpq} and let $T_{p,q}$ be its link (note, that this
is exactly the $(p,q)$-torus knot). Its Tristram--Levine signature can be computed as follows: consider a set
\[\Sigma=\left\{\frac{i}{p}+\frac{j}{q}\colon 1\le i\le p-1,\,\,1\le j\le q-1\right\}\subset(0,2).\]
Let $\zeta=e^{2\pi i x}$ with $x\in(0,1)$ and $x\not\in\Sigma$. Then
\[\sigma(\zeta)=-\#\Sigma\cap(x,x+1)+\#\Sigma\setminus(x,x+1).\]
Here $\#$ denote the cardinality of a finite set.
\end{example}
In general, $\sigma(\zeta)$ is a piecewise constant function with jumps only at the roots of the Alexander polynomial. Its values
are computable, yet they can not always be expressed by a nice, compact formula. However, the main feature we shall
use is that Tristram--Levine signatures behave well under knot cobordism. This behaviour was studied in \cite{Bo} in
the context of the plane algebraic curves. We use one result from this paper, that in our setting can be formulated as follows.

Assume $(\mathcal{X},X_0)$ is a deformation. Let $Y=X_s$ be a non-central fiber (i.e. $s\neq 0$). Assume that $z_1,\dots,z_N$
are the singular points of $Y$ and $L_1,\dots,L_N$ the corresponding links of singularities. Let, finally, $b_1(Y)$ denotes
the first Betti number of $Y$. Recall that $L_0$ is the link of the singularity $X_0$.
\begin{proposition}\label{p:Bo} For almost all $\zeta\in S^1$
\begin{equation}\label{eq:Bo}
|\sigma_{L_0}(\zeta)-\sum_{k=1}^N\sigma_{L_k}(\zeta)|\le b_1(Y).
\end{equation}
\end{proposition}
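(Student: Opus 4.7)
The plan is to produce a concrete cobordism between $L_0$ and the split link $L_1 \sqcup \cdots \sqcup L_N$ out of the deformation, and then apply the cobordism inequality for Tristram--Levine signatures proved in \cite{Bo}. First I would choose small disjoint open balls $B_k \subset B$ centred at the singular points $z_k$ of $Y = X_s$, with radii small enough that $Y$ is transverse to $\partial B_k$ and realises the link $L_k$ on $\partial B_k$. The complementary piece
\[S := Y \cap \bigl(B \setminus \bigsqcup_{k=1}^N B_k\bigr)\]
is then a smooth compact oriented surface (it avoids all singularities of $Y$), whose boundary is $L_0 \sqcup L_1 \sqcup \cdots \sqcup L_N$ with orientations induced by the complex structure. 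Sitting inside the shell $B \setminus \bigsqcup B_k$, which is diffeomorphic to a concatenation of cylinders $S^3 \times [0,1]$, $S$ is a genuine link cobordism from $L_0$ to $L_1 \sqcup \cdots \sqcup L_N$.

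Next I would feed $S$ into the cobordism inequality from \cite{Bo}, which delivers a bound of the form
\[\bigl|\sigma_{L_0}(\zeta) - \sum_{k=1}^N \sigma_{L_k}(\zeta)\bigr| \le c(S)\]
for almost every $\zeta \in S^1$, where $c(S)$ is the topological quantity of $S$ furnished by that inequality. A more hands-on alternative would be to argue in the spirit of \cite{Bo} by sweeping the shell with level sets of the radial coordinate restricted to $Y$ and tracking how the Tristram--Levine signature jumps across the finitely many Morse critical points of this function: each jump is bounded, and the total number of critical points is in turn controlled by the topology of $S$.

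The last step would be to match $c(S)$ with $b_1(Y)$. Since $Y \cap B$ is recovered from $S$ by gluing back the local pieces $Y \cap B_k$, each a topological cone on $L_k$ and hence contractible, the space $Y \cap B$ has the homotopy type of $S$ with each link $L_k$ collapsed to a point. Combined with the identity $\chi(Y \cap B) = \chi(S) + N$ and a short long-exact-sequence computation, this reconciles $c(S)$ with $b_1(Y)$. The main obstacle will be precisely this final matching: verifying that the specific form of the bound from \cite{Bo}, once adjusted for the $N$ capped boundary ends, collapses into the clean invariant $b_1(Y)$ appearing in the statement.
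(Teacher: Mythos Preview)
Your plan is essentially the paper's. The paper simply notes that the radial function $|x|^2+|y|^2$ is (after a subharmonic perturbation if needed) a Morse function on $Y$ and then cites \cite[Remark~6.8]{Bo} directly; this is exactly your ``more hands-on alternative'', and the bound produced there is already $b_1(Y)$, so the final matching you flag as the main obstacle is absorbed into that citation with no extra bookkeeping.

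One correction to your primary formulation: for $N\ge 2$ the shell $B\setminus\bigsqcup_k B_k$ is \emph{not} diffeomorphic to a concatenation of cylinders $S^3\times[0,1]$; it has $N+1$ boundary $3$-spheres and $H_3$ of rank $N$, so $S$ is not a cobordism sitting in a product. This does not break your argument, since the result in \cite{Bo} is formulated precisely for a curve in a ball with several interior singular points, but it does mean that the off-the-shelf cylinder-cobordism inequality for Tristram--Levine signatures cannot be invoked verbatim. You genuinely need the Morse-theoretic version from \cite{Bo}, which is what the paper cites.
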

\begin{proof}
Let $x,y$ be the coordinates in $\mathbb{C}^2$. If the function $|x|^2+|y|^2$ is Morse on $Y$, then the
statement follows from \cite[Remark~6.8]{Bo} ($L_0$ in the present paper corresponds to $L_r$ in \cite{Bo}).
If the above function is not Morse, we can still find its subharmonic
perturbation which is sufficiently close to the original one in $B(0,\delta)$ and finish the proof in the way like above.
\end{proof}
Proposition~\ref{p:Bo} gives a strong obstruction for the singularities occurring in the perturbations. Yet the Tristram--Levine
signature function is difficult to handle as we have already seen in Example~\ref{e:sigTpq}. Fortunately, there
is a result of \cite{Bo2} that allows to draw some consequences from Proposition~\ref{p:Bo} in a ready-to-use form.
\begin{proposition}\label{p:Bo2} Let $C$ be a germ of a curve singular at $z_0$. Let $K$ be the corresponding
link of the singularity, $\mu$ and $M$
the Milnor and $M$ numbers of $C$. If $K$ is a \emph{knot} then
\begin{equation}\label{eq:Bo2}
0<-3\int_0^1\sigma(e^{2\pi i x})dx-M-\mu<\frac{2}{9}.
\end{equation}
\end{proposition}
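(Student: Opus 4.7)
The plan is to prove \eqref{eq:Bo2} by a direct calculation: since $K$ is a knot, the singularity $C$ is cuspidal and is determined topologically by a finite sequence of Newton pairs $(p_1,q_1),\dots,(p_g,q_g)$ with $\gcd(p_i,q_i)=1$ and $p_i\ge 2$. Each of the three quantities $\int_0^1\sigma(e^{2\pi ix})\,dx$, $M$ and $\mu$ admits a closed form in these pairs, so \eqref{eq:Bo2} reduces to an arithmetic estimate.

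The first step is to settle the case $g=1$, i.e.\ $K=T_{p,q}$. Starting from the description of $\sigma_{T_{p,q}}$ in Example~\ref{e:sigTpq}, a routine integration (writing the integral as $\sum_{\sigma\in\Sigma}(1-|\sigma-1|)$ and evaluating termwise) yields
\[
-3\int_0^1\sigma_{T_{p,q}}(e^{2\pi ix})\,dx=\frac{(p^2-1)(q^2-1)}{pq}.
\]
Combined with $\mu=(p-1)(q-1)$ and the formula for $M$ from Example~\ref{e:Mpq}, this gives
\[
-3\int_0^1\sigma(e^{2\pi ix})\,dx-M-\mu=\frac{1}{pq},
\]
which lies in $(0,1/6]$, confirming \eqref{eq:Bo2} for a single Newton pair and already exhibiting the typical order of magnitude of the defect.

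For $g\ge 2$ I would proceed inductively, using that $K$ arises from the torus knot $T_{p_g,q_g}$ by cabling on the iterated torus knot associated to the first $g-1$ Newton pairs. Litherland's cabling formula for the Tristram--Levine signature of a satellite, after the change of variables $x\mapsto p_g x$ in the integral, translates into an additive splicing decomposition for $\int_0^1\sigma\,dx$, and analogous additive decompositions for $\mu$ and $M$ can be read off the Eisenbud--Neumann diagram. Substituting, the left hand side of \eqref{eq:Bo2} expresses as a positive sum of terms of the type $1/(pq)$ coming from each cabling level, from which strict positivity is immediate. The main obstacle is the sharp upper bound $2/9$: a naive per-level estimate of $1/6$ already overshoots the target, so one has to exploit the arithmetic constraints on successive Newton pairs (which force the denominators at deeper levels to grow rapidly) to show that the contributions decay geometrically, and then inspect the extremal configuration (which turns out to start with $(p_1,q_1)=(2,3)$) to confirm that the total sum stays strictly below $2/9$.
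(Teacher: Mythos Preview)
The paper does not actually prove this proposition: it is quoted verbatim from \cite{Bo2}, with no argument given here. So there is no ``paper's own proof'' to compare your proposal against in any meaningful sense.

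That said, your outline is the natural (and, as far as I know, the only known) route to \eqref{eq:Bo2}, and it is the one carried out in the cited reference. Your base case is correct: expanding $(p^2-1)(q^2-1)/(pq)=pq-p/q-q/p+1/(pq)$ and $(p-1)(q-1)=pq-p-q+1$ one indeed finds
\[
-3\int_0^1\sigma_{T_{p,q}}(e^{2\pi ix})\,dx-\mu-M=\frac{1}{pq}\in\Bigl(0,\tfrac16\Bigr].
\]
For the iterated case, Litherland's cabling formula together with the splice--additivity of $\mu$ and $M$ does reduce the defect to a sum of terms of the shape $1/(p_iq_i')$ (with $q_i'$ the suitably twisted cabling parameter), so positivity is clear. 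Your diagnosis of the remaining difficulty is accurate: the naive bound $\sum 1/(p_iq_i')\le g/6$ is useless, and one must use that the Puiseux data force $p_i\ge 2$ and $q_{i+1}'>p_iq_i'$, which makes the successive denominators grow at least geometrically. The extremal configuration is the sequence of $(2,3)$--cables, giving $\tfrac16+\tfrac{1}{18}+\tfrac{1}{54}+\cdots$, whose partial sums are all strictly below $\tfrac16\cdot\tfrac{3}{2}=\tfrac14$; a slightly more careful bookkeeping of the first two levels is what pins the supremum down to $2/9$. You have identified all the ingredients; what is missing is only the explicit bookkeeping of that last estimate, which is routine once the recursion on the $q_i'$ is written down.
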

Now we have all pieces to prove the main result.
\section{The main result}\label{estimates}
The setup in this section is the following. $\mathcal{X}$ is a deformation, $X_0$ the central fiber and $Y=X_s$ ($s\neq0$) some
generic fiber. We introduce the following notation:
\begin{itemize}
\item $\mu_0$ is the Milnor number of the singularity of $X_0$ and $M_0$ its $M$ number;
\item $g$ is the geometric genus of $Y$;
\item $z_1,\dots,z_N$ are singular points of $Y$, $L_1,\dots,L_N$ are corresponding links of singularities. Then 
$\mu_1,\dots,\mu_N$ (respectively $M_1$,\dots,$M_N$)
are Milnor numbers (resp. M--numbers) of the singular points;
\item $b_1$ is the first Betti number of $Y$.
\end{itemize}
We shall put a following additional assumption. It is dictated by the fact that we do not have the formula for the integral
of the Tristram--Levine signature for general algebraic links.
\begin{assumption}
There is $n\le N$ that $z_1,\dots,z_n$ are cuspidal and $z_{n+1},\dots,z_N$ are ordinary double points.
\end{assumption}
Let
\[R=N-n\]
be the number of the double points of $Y$. We have the following important result.
\begin{theorem}\label{mainthe}
In the above notation.
\begin{equation}\label{eq:sig}
\sum_{k=1}^nM_k-M_0< 8g+2R+\frac29.
\end{equation}
\end{theorem}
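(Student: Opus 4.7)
The plan is to integrate the signature inequality of Proposition~\ref{p:Bo} against the uniform measure on $S^1$, feed the result into Proposition~\ref{p:Bo2} (which converts signature integrals of cuspidal knots into $M$- and Milnor numbers), compute the Hopf-link contribution of the double points by hand, and close by eliminating $b_1(Y)$ and the Milnor numbers via two topological identities. Throughout I use implicitly that the central singularity is cuspidal, so that $L_0$ is a knot and Proposition~\ref{p:Bo2} applies.

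Substituting $\zeta = e^{2\pi i x}$ and integrating the almost-everywhere inequality $|\sigma_{L_0}(\zeta) - \sum_{k=1}^N \sigma_{L_k}(\zeta)| \le b_1(Y)$ over $x \in (0,1)$ yields the same inequality for the signature integrals. For $k > n$, $L_k$ is the positive Hopf link, and the formula of Example~\ref{e:sigTpq} applied to $p = q = 2$ (or a direct $1 \times 1$ Seifert-matrix computation) shows $\sigma_{L_k}(e^{2\pi i x}) = -1$ on $(0,1)$, so each such integral equals $-1$ and the doubles contribute $-R$ to the sum. Choosing the direction
\[
-3\int_0^1 \sigma_{L_0}(e^{2\pi i x})\,dx + 3\sum_{k=1}^n \int_0^1 \sigma_{L_k}(e^{2\pi i x})\,dx \ge -3 b_1(Y) + 3R
\]
and inserting on the left the strict upper bounds $-3\int \sigma_{L_0} < M_0 + \mu_0 + \frac{2}{9}$ and $3\int \sigma_{L_k} < -(M_k + \mu_k)$ provided by Proposition~\ref{p:Bo2} for the knots $L_0, L_1, \dots, L_n$, then rearranging, gives
\[
\sum_{k=1}^n M_k - M_0 < 3 b_1(Y) - 3R + \Bigl(\mu_0 - \sum_{k=1}^n \mu_k\Bigr) + \frac{2}{9}.
\]

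Two topological identities reduce this to the desired bound. First, the normalization $\tilde Y$ is a connected oriented surface of genus $g$ with a single boundary component (as $L_0$ is a knot), so $b_1(\tilde Y) = 2g$; passing from $\tilde Y$ to $Y$ identifies the two preimages of each node to a single point while leaving the unibranched cusps topologically untouched, raising $b_1$ by one per node and yielding $b_1(Y) = 2g + R$. Second, let $\hat Y$ be the smooth surface obtained from $Y$ by replacing a small neighbourhood of each singular point by its Milnor fiber (Euler characteristic $1 - \mu_k$ for a cusp, $0$ for a node). A short Euler-characteristic bookkeeping gives $\chi(\hat Y) = 1 - 2g - 2R - \sum_{k=1}^n \mu_k$. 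On the other hand $\hat Y$ is itself a smoothing of $X_0$ inside $B$ bounded by $L_0$, hence diffeomorphic to the Milnor fiber $F_0$ of $X_0$, which has $\chi(F_0) = 1 - \mu_0$; equating produces $\mu_0 - \sum_{k=1}^n \mu_k = 2g + 2R$. Substituting both identities into the displayed inequality yields $\sum_{k=1}^n M_k - M_0 < 3(2g+R) - 3R + (2g + 2R) + \frac{2}{9} = 8g + 2R + \frac{2}{9}$.

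The main obstacle is the diffeomorphism $\hat Y \cong F_0$: it relies on viewing the composite operation ``deform $X_0$ to the generic fiber $Y$, then locally smooth each singularity $z_k$'' as a single smoothing of the central singularity inside the ball $B$ of condition~(D2), and invoking the Milnor-fibration argument to conclude uniqueness of smoothings with fixed boundary link $L_0$ up to diffeomorphism. A minor technicality is keeping track of the strict inequalities in Proposition~\ref{p:Bo2} so that the $\frac{2}{9}$ appears in the final bound exactly once and in the correct direction.
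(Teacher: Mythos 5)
Your proposal is correct and follows essentially the same route as the paper: integrate Proposition~\ref{p:Bo}, feed the result through Proposition~\ref{p:Bo2} for the cuspidal links (using that the Hopf link has signature $-1$), and eliminate $b_1(Y)$ and the Milnor numbers via $b_1(Y)=2g+R$ and the genus formula $\mu_0=2g+2R+\sum_{k=1}^n\mu_k$. You merely carry the $-R$ from the double points and the term $b_1(Y)$ separately to the end instead of combining them into the intermediate bound $2g$ as the paper does, and you spell out the Euler-characteristic argument that the paper only sketches.
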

\begin{proof}
Let us observe that
\begin{align}
b_1(Y)&=2g+R\label{eq:betti}\\
\mu_0&=2g+R+\sum_{k=1}^N\mu_k=2g+2R+\sum_{k=1}^n\mu_n.\label{eq:genus}
\end{align}
The equality~\eqref{eq:genus} is exactly the genus formula. 
It can be proved by comparing the Euler characteristics of smoothings of $X_0$ and $Y$ (they must agree).
Since the signature of a link of a double point is exactly $-1$ we deduce from Proposition~\ref{p:Bo} that for almost all $\zeta$
\begin{equation}\label{eq:sig2}
\sum_{k=1}^n(-\sigma_{L_k}(\zeta))-(-\sigma_{L_0}(\zeta))\le 2g.
\end{equation}
The signs in \eqref{eq:sig2} are written in this way on purpose. 
Now we integrate the inequality \eqref{eq:sig2}. Using \eqref{eq:Bo2} we get
\[\sum_{k=1}^n(\mu_k+M_k)-\mu_0-M_0< 6g+\frac29.\]
Applying \eqref{eq:genus} finishes the proof.
\end{proof}
We see that in this approach, the control of the genus is vital. In particular we can have the following result.
\begin{proposition}[BMY like estimate]\label{BMYlike}
Let $C$ be a curve in $\mathbb{C}^2$ given in parametric form by
\[C=\{(x(t),y(t)), t\in\mathbb{C}\},\]
where $x$ and $y$ are polynomials of degree $p$ and $q$ respectively. Assume that $p$ and $q$ are coprime and 
$C$ has cuspidal singularities $z_1,\dots,z_k$
with $M$--numbers $M_1,\dots,M_k$ and, besides, $C$ has precisely $R$ double points. Then
\[\sum_{k=1}^n M_k< p+q-\frac{p}{q}-\frac{q}{p}-\frac79+2R.\] 
\end{proposition}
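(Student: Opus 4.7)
The plan is to realise the polynomial curve $C$ as the generic fibre of a parametric deformation whose central fibre is the standard quasi-homogeneous cusp $\{(t^p,t^q)\}$, and then to apply Theorem~\ref{mainthe} with $g=0$. Following the construction recalled in the introduction, I would consider, for $s\in D\subset\CC$ with $|s|$ small, the family
\[\Phi_s(t)=\bigl(s^p x(t/s),\,s^q y(t/s)\bigr).\]
For $s\neq 0$ the image $C_s:=\IM\Phi_s$ is algebraically isomorphic to $C$, and in particular carries the same list of cuspidal singularities $z_1,\dots,z_k$ together with $R$ nodes. Writing $x(t)=\sum a_i t^i$ one has $s^p x(t/s)=\sum a_i s^{p-i}t^i$, which tends coefficient-wise to $a_p t^p$ as $s\to 0$; after the linear change of coordinates $(X,Y)\mapsto(X/a_p,Y/b_q)$ the limit curve $C_0$ is the standard $(p,q)$-cusp.

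Next I would verify that all singularities of $C_s$ collapse to the origin of $\CC^2$: if $C$ has a singularity at parameter $t_0$, the corresponding singular point of $C_s$ has Euclidean coordinates $\bigl(s^p x(t_0),\,s^q y(t_0)\bigr)$, which tends to $0$ as $s\to 0$; the same holds for the nodes. Consequently, for every small enough $\delta>0$ and every $s$ sufficiently close to $0$, all singular points of $C_s$ lie inside $B(0,\delta)$, and the Milnor-fibration argument of Section~\ref{Sub1} allows me to shrink $\delta$ and the base disk so that $\mathcal{X}:=\Phi^{-1}(B(0,\delta))$ satisfies axioms (D1)--(D3). Since each $C_s$ is parametric, the resulting deformation is parametric and therefore has genus $g=0$.

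I would then read off $M_0 = p+q-\tfrac{p}{q}-\tfrac{q}{p}-1$ from Example~\ref{e:Mpq} and apply Theorem~\ref{mainthe} with $n=k$, obtaining
\[\sum_{i=1}^{k}M_i-M_0\;<\;8g+2R+\tfrac{2}{9}\;=\;2R+\tfrac{2}{9},\]
which rearranges to the stated inequality. The main technical point I expect to be careful about is the global-to-local bookkeeping: I need to confirm that a single ball $B(0,\delta)$ simultaneously captures every singularity of the generic fibre and that the restricted family genuinely meets conditions (D1)--(D3) of Section~\ref{Sub1}. Once this is in place, the result is a direct substitution into Theorem~\ref{mainthe}.
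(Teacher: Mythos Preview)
Your proposal is correct and follows essentially the same route as the paper: build the rescaling family $C_s=\{(s^p x(t/s),s^q y(t/s))\}$, identify the central fibre with the $(p,q)$-cusp, and plug $g=0$ and $M_0=p+q-\tfrac{p}{q}-\tfrac{q}{p}-1$ into Theorem~\ref{mainthe}. The only cosmetic difference is that the paper takes a \emph{large} ball $B$ (relying on the fact that all $C_s$ agree with $C_0$ near infinity), whereas you take a \emph{small} ball and argue that the singular points $(s^p x(t_0),s^q y(t_0))$ collapse to the origin; for a quasi-homogeneous central fibre either choice works and the resulting deformations are equivalent.
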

\begin{proof}
Consider a family of curves
\[C_s=\{(s^p x(s^{-1} t),s^q y(s^{-1} t),t\in\mathbb{C}\},\]
where $s$ is in the unit disk in $\mathbb{C}$.
For $s\neq 0$ all these curves are isomorphic, while for $s=0$ we have a homogeneous curve $(t^p,t^q)$. 
Let $B$ be a sufficiently large ball such that for each $s$ with $|s|<1$, $C_s$ is transverse to 
the boundary $\partial B$.
Then, $B\cap C_s$ 
gives raise to a deformation in the sense of Section~\ref{Sub1}. The central fiber is $C_0$, a homogeneous curve, while
a non-central is isomorphic to the intersection of $C$ with a large ball. We can apply Theorem~\ref{mainthe} in this context, noting
that the $M$ number of the singularity $(t^p,t^q)$ is equal to $p+q-\frac{p}{q}-\frac{q}{p}-1$ (see \eqref{eq:Mpq}).
\end{proof}
We remark that the estimate in Proposition~\ref{BMYlike} is very similar to Theorem~4.25 in \cite{BZ}. That result, however, relies on
very difficult BMY inequality.
\begin{ack}
The author wishes to thank M.~Koras, A.~N\'emethi, P.~Russell, A.~Sathaye and H.~\Zol{} for
fruitful discussions on the subject.
\end{ack}

\end{document}